\documentclass[11pt]{amsart}

\usepackage{amsmath,amssymb,amscd,amsfonts,verbatim}
\usepackage{paralist}
\usepackage[mathscr]{eucal}
\usepackage{hyperref}
\usepackage{color}

\newtheorem{thm}{Theorem}[section]
\newtheorem{lem}[thm]{Lemma}

\newtheorem{prop}[thm]{Proposition}
\newtheorem{cor}[thm]{Corollary}
\newtheorem{defn}[thm]{Definition}

\newtheorem{assu-nota}[thm]{Assumption--Notation}
\theoremstyle{remark}

\newcommand{\Z}{\mathbb Z}

\newcommand{\G}{\mathbb G}

\newcommand{\pp}{\mathbb P}

\newcommand{\OO}{\mathcal O}

\DeclareMathOperator{\Pic}{Pic}
\DeclareMathOperator{\NS}{NS}

\DeclareMathOperator{\im}{Im}

\newcommand{\Si}{\Sigma}

\newcommand{\fie}{\varphi}

\numberwithin{equation}{section}

\title{On surfaces of general type with $q=5$}
\author{Margarida Mendes Lopes, Rita Pardini and Gian Pietro Pirola}

\thanks{
The first author is a member of the Center for Mathematical
Analysis, Geometry and Dynamical Systems (IST/UTL).  The second and the third author are members of G.N.S.A.G.A.--I.N.d.A.M.}
\begin{document}
\begin{abstract}
We prove that a complex surface $S$ with irregularity $q(S)=5$ that has no irrational  pencil of genus $>1$ has geometric genus $p_g(S)\ge 8$. As a consequence, one is  able to classify minimal surfaces $S$ of general type with $q(S)=5$ and $p_g(S)<8$.
This result  is a negative answer, for $q=5$,  to the question asked in \cite{mr} of the existence of surfaces of general type  with irregularity $q\ge 4$  that have no irrational pencil of genus $>1$ and with the lowest possible geometric  genus $p_g=2q-3$. This   gives some evidence for the conjecture 
that the only  irregular surface with no irrational pencil of genus $>1$ and  $p_g=2q-3$ is the symmetric product of a  genus three curve. \medskip

\noindent{\em 2000 Mathematics Subject Classification:} 14J29
\end{abstract}
\maketitle
\section{Introduction} 

Let $S$ be a smooth complex projective surface with irregularity $q(S):=h^0(\Omega^1_S)\ge 3$. The existence of a fibration $f\colon S\to B$ with $B$ a smooth curve of genus $b>1$ (``an irrational pencil of genus $b>1$'') gives much geometrical  information on $S$ (cf. the survey  \cite{survey}).  However,  surfaces with an irrational pencil of genus $b>1$ can hardly  be regarded as ``general'' among the irregular surfaces of general type:  for instance, for $b<q(S)$  the Albanese variety of such a surface $S$ is not simple.

By the classical  Castelnuovo-De Franchis theorem, if $S$ has no irrational pencil of genus $>1$ then the inequality $p_g(S)\ge 2q(S)-3$ holds, where $p_g(S):=h^0(K_S)$ is, as usual,  the geometric genus.  Note that this inequality has been recently generalized in \cite{PareschiPopa} to K\"ahler varieties of arbitrary dimension. 

The surfaces of general type  $S$  for which the equality  $p_g(S)=2q(S)-3$ holds are  studied in \cite{mr}.  There  those with an irrational pencil of genus $>1$ are classified and the inequality $K^2_S\ge 7\chi(S)-1$ is proven for $S$ minimal.  However, the question of the existence of surfaces with $p_g(S)=2q(S)-3$ having no irrational pencil of genus $b>1$ is  widely open. At present, the state of the art is as follows:
\begin{itemize}
\item for $q=3$, the only such surfaces are (the minimal desingularization of) a theta divisor in a principally polarized abelian threefold (\cite{HaconPardini}, \cite{Pirola});
\item for $q=4$, if $S$ is minimal then $K^2_S=16,17$ (\cite{bnp},\cite{cau});
\item for $q\ge 4$, no example is known.
\end{itemize}

\medskip

One is led to conjecture that the only  irregular surface with no irrational pencil of genus $>1$ and  $p_g=2q-3$ is the symmetric product of a  genus three curve.  In this note we settle the case $q=5$: 
\begin{thm}\label{noq5}   Let $S$ be a smooth projective complex surface  with $q(S)=5$ 
 that has  no irrational pencils of genus $>1$. Then:
 $$p_g(S)\geq 8.$$
\end{thm}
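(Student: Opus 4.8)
The plan is to show that the Castelnuovo--De Franchis bound $p_g(S)\ge 2q(S)-3=7$ is strict, i.e.\ to rule out $p_g(S)=7$. Write $V:=H^0(\Omega^1_S)$, so $\dim V=q=5$, and consider the wedge (= cup product) map
$$w\colon \textstyle\bigwedge^2 V\lra H^0(K_S),\qquad \omega_1\wedge\omega_2\mapsto \omega_1\wedge\omega_2 .$$
By the generalized Castelnuovo--De Franchis theorem, the hypothesis that $S$ carries no irrational pencil of genus $>1$ is equivalent to the statement that $\Ker w$ contains no nonzero decomposable vector, i.e.\ that $\pp(\Ker w)\subset\pp(\bigwedge^2 V)=\pp^9$ is disjoint from the Grassmannian $G(2,V)$.

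First I would record the consequences of this disjointness. Since $G(2,V)\subset\pp^9$ has dimension $6$, a linear subspace disjoint from it has projective dimension $\le 2$; hence $\dim\Ker w\le 3$ and $\dim\im w\ge 7$, which already recovers $p_g\ge 7$. Assuming now $p_g=7$, the inclusion $\im w\subseteq H^0(K_S)$ forces $\im w=H^0(K_S)$ (so $w$ is surjective) and $\dim\Ker w=3$, i.e.\ $\pp(\Ker w)=\pp^2$ is a plane disjoint from $G(2,V)$; equivalently, every nonzero $\xi\in\Ker w$ has maximal rank $4$ as an alternating form. I would then bring in the Albanese map $a\colon S\to A:=\Alb(S)$. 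Because $S$ has no irrational pencil of genus $>1$ and $q=5$, the image $a(S)$ cannot be a curve (a curve generating $A$ would have genus $\ge 5$, producing such a pencil); thus $\albdim(S)=2$ and $a$ is generically finite onto a surface $Y\subseteq A$. Identifying $V$ with the invariant $1$--forms on $A$, evaluation of $w$ at a smooth point $p$ factors through the tangent plane $\Pi_p:=da_p(T_pS)\subset T_0A=V^*$, and one checks that the hyperplane $\Ker(\mathrm{ev}_p\circ w)\subset\bigwedge^2 V$ is precisely the annihilator of the Plücker point $\bigwedge^2\Pi_p$. Consequently
$$\Ker w=\langle \gamma(S)\rangle^{\perp},$$
where $\gamma\colon S\dashrightarrow G(2,V^*)\hookrightarrow\pp(\bigwedge^2 V^*)$ is the Gauss map. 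Thus $p_g=7$ is equivalent to the degeneracy statement that the Gauss image spans only a $\pp^6$, while the no--pencil hypothesis says that no $2$--plane $L\subset V$ meets every conormal $3$--plane $\Pi_p^{\perp}$.

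The heart of the matter---and the step I expect to be hardest---is to derive a contradiction from these two pulling--apart conditions (span exactly $\pp^6$ versus no common transversal). The extra input beyond linear algebra is Hodge--theoretic positivity. On $\bigwedge^2 V$ consider the Hermitian form
$$h(\xi,\eta):=\int_S w(\xi)\wedge\overline{w(\eta)} .$$
By the Hodge--Riemann bilinear relations the pairing $H^{2,0}\times H^{0,2}\to\C$ is (sign--)definite, so $h$ is semidefinite with radical \emph{exactly} $\Ker w$. On the other hand $h$ is computed by the fundamental class $[S]\in H_4(A)\cong\bigwedge^4 H_1(A)$ via $h(\omega_1\wedge\omega_2,\omega_3\wedge\omega_4)=\langle [S],\,\omega_1\wedge\omega_2\wedge\bar\omega_3\wedge\bar\omega_4\rangle$, so the semidefiniteness of $h$ is a strong positivity of the effective Hodge class $[S]$.

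The plan is to show that this positivity---together with the fact that $\gamma$ is not an arbitrary map to $G(2,V^*)$ but an honest Gauss image, so that the symmetry of the second fundamental form of $Y\subset A$ constrains $d\gamma$---forces the radical $\Ker w$ to contain a decomposable vector, i.e.\ a common transversal $2$--plane $L$, contradicting the no--pencil hypothesis. Concretely I would classify the possible normal forms of $\Ker w$ (equivalently, the nets of nondegenerate $5\times 5$ alternating forms avoiding $G(2,V)$) and rule each one out against the semidefiniteness of $h$ and the integrability constraints on $\gamma$; this interplay between a normal--form analysis and the positivity of $h$ is where the real work lies.
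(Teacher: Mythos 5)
Your setup is correct and coincides with the opening of the paper's argument: the Castelnuovo--De Franchis condition makes $\Ker w$ a $3$-dimensional subspace of $\bigwedge^2V$ meeting the cone over $G(2,V)$ only in $0$, $w$ is surjective when $p_g=7$, the Albanese map is generically finite, and the canonical image sits inside the codimension-$3$ linear section $Y$ of the Grassmannian. The identification of $h(\xi,\xi)=\int_S w(\xi)\wedge\overline{w(\xi)}$ as positive semidefinite with radical $\Ker w$ is also right. But everything after that is a declaration of intent, not a proof: you say the ``heart of the matter'' is to show that the positivity of $h$ plus the Gauss-map structure forces a decomposable vector into $\Ker w$, and that you ``would classify the possible normal forms'' and ``rule each one out'' --- this classification and the ruling-out are never performed, and they are precisely the entire content of the theorem. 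Note that a generic $\pp^2\subset\pp^9$ is already disjoint from $G(2,V)$ (since $2+6<9$), so there is no linear-algebraic obstruction to overcome; whatever contradiction exists must be extracted from the global geometry of $S$, and you give no indication of how the semidefiniteness of $h$ or the symmetry of the second fundamental form would actually do this. There is no evidence that this route closes: the analogous statement for $q=4$ (where the same soft structure is available) remains open.

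The paper's actual proof is of a quite different nature: it produces \emph{contradictory numerical bounds on $K_S^2$} for a minimal $S$ with $p_g=7$, $q=5$. The upper bound $K_S^2\le 23-t$ comes from the Causin--Pirola lower bound on $b_2$ (which is, in effect, the rigorous exploitation of the Hermitian form you introduce --- but packaged as an inequality $h^{1,1}\ge 11+t$, not as a normal-form analysis of $\Ker w$). The lower bound is where the real work lies and uses tools entirely absent from your proposal: the fact that $\Sigma$ is a divisor of degree $5m$, $m\ge 3$, on the smooth linear section $Y$ of $G(2,5)$ with $\Pic(Y)=\Z$; a count of quadrics through $Y$ giving $h^0(2M)\ge 23$; a new Reider-theoretic refinement of Debarre's inequality (Corollary 2.6 of the paper) controlling the fixed part $Z$ of $|K_S|$; and the Barja--Naranjo--Pirola results ($K^2\ge 8\chi$ for generalized Lagrangian surfaces, and the nonexistence of genus-$2$ divisors of self-intersection $0$) to eliminate the residual cases $Z=0$, $K_SZ=0$ and $K_SZ=1$. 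As it stands, your proposal establishes only $p_g\ge 7$ (which is the known Castelnuovo--De Franchis bound) and leaves the improvement to $p_g\ge 8$ unproved.
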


As a consequence we obtain the following classification theorem:
\begin{thm}\label{irrpencil} Let  $S$ be a minimal complex surface of general type with $q(S)=5$ and $p_g(S)\le 7$. 
Then either:
\begin{enumerate}
\item  $p_g(S)=6$, $K_S^2=16$ and $S$ is the product of a curve of genus 2 and a curve of genus 3; or

\item $p_g(S)=7$, $K_S^2=24$ and  $S=(C\times F)/\Z_2$, where $C$  is a curve of  genus $7$ with a free $\Z_2$-action, $F$  is a curve of genus  2 with a $\Z_2$-action such that $F/\Z_2$ has genus 1 and $\Z_2$ acts diagonally on $C\times F$.  The  map $f\colon S \to C/\Z_2$  induced by the projection $C\times F\to C$ is an irrational pencil  of genus   $4$ with general fibre $F$ of genus 2. 
\end{enumerate}

\end{thm}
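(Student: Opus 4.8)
The plan is to combine Theorem~\ref{noq5} with the numerical theory of fibred surfaces and the structure theory of isotrivial fibrations. Since $p_g(S)\le 7<8$, Theorem~\ref{noq5} forces $S$ to carry an irrational pencil $f\colon S\to B$ with $b:=g(B)\ge 2$; let $g$ be the genus of a general fibre $F$ and $q_f:=q(S)-b=5-b$ the relative irregularity. First I would record the standard constraints: $g\ge 2$ (as $S$ is of general type), $q_f\le g$, and the relative Euler characteristic formula $\chi(\OO_S)=(g-1)(b-1)+\deg f_*\omega_{S/B}$ with $\deg f_*\omega_{S/B}\ge 0$. Since $q(S)=5$ gives $\chi(\OO_S)=p_g(S)-4\le 3$, the inequality $(g-1)(b-1)\le 3$ together with $q_f=5-b\le g$ leaves only the possibilities $(g,b)\in\{(3,2),(2,3),(4,2),(2,4)\}$; in particular $p_g=5$, i.e. $\chi=1$, would force $(g,b)=(2,2)$ and hence $q_f=3>g$, so $p_g\in\{6,7\}$.

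The heart of the argument is to show that in every case $f$ is \emph{isotrivial}. In the cases $(3,2)$ and $(2,3)$ one has $q_f=g$, i.e. maximal relative irregularity: the $q_f$ ``new'' holomorphic $1$-forms restrict to flat sections of the Hodge bundle $f_*\omega_{S/B}$ of rank $g$, so when $q_f=g$ the entire variation of Hodge structure is trivial, the period map is constant, and $f$ is isotrivial. In the cases $(4,2)$ and $(2,4)$ one has $(g-1)(b-1)=3=\chi(\OO_S)$, forcing $\deg f_*\omega_{S/B}=0$; by Fujita--Arakelov positivity this again makes $f$ isotrivial. In either situation the structure theorem for isotrivial fibrations presents the minimal surface as $S=(F\times\widetilde B)/G$, with $G$ acting faithfully on the genus-$g$ curve $F$ and on a curve $\widetilde B$ (with $\widetilde B/G=B$), diagonally and freely on the product. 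Computing invariants from the $G$-invariant $1$-forms gives $q(S)=g(F/G)+g(B)$, so $g(F/G)=q_f=5-b$.

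It then remains to identify $G$ by a Riemann--Hurwitz analysis of $F\to F/G$. When $q_f=g$ (cases $(3,2),(2,3)$) one gets $g(F/G)=g(F)$, so $G$ acts trivially on $F$ and $S=F\times B$ is a product of curves of genera $\{2,3\}$; this yields $p_g=6$, $K_S^2=16$, i.e. conclusion (1). The case $(4,2)$ would require a genus-$4$ curve with a quotient of genus $3$, which is impossible since any nontrivial quotient of a genus-$4$ curve has genus $\le 2$, so this case is empty. In the case $(2,4)$ one needs $g(F/G)=1$ with $F$ of genus $2$; writing Riemann--Hurwitz as $\sum_{\bar P}(1-1/m_{\bar P})=2/|G|$ and using that each branch point contributes at least $1/2$ (whence $|G|\le 4$), together with the vanishing of abelian local monodromy on a positive-genus base (which kills $|G|=3,4$), forces $G=\Z_2$ acting on $F$ with two fixed points and $F/\Z_2$ of genus $1$. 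As a free $\Z_2$-action on a genus-$2$ curve does not exist, $\sigma$ must act freely on $\widetilde B=:C$; then $C/\Z_2=B$ has genus $4$ and Riemann--Hurwitz gives $g(C)=7$. This is precisely conclusion (2), with $p_g=7$, $K_S^2=24$.

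The main obstacle is the isotriviality step: establishing that both maximal relative irregularity ($q_f=g$) and the vanishing of $\deg f_*\omega_{S/B}$ force the period map to be constant, and then invoking the precise structure theorem for isotrivial fibred surfaces, with correct handling of minimality and of multiple versus smooth fibres. Once isotriviality and the quotient presentation $S=(F\times\widetilde B)/G$ are in hand, the remaining enumeration of admissible group actions is an essentially combinatorial Riemann--Hurwitz computation.
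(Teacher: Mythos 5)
Your argument is correct in substance, but it takes a genuinely different route from the paper, whose proof of this theorem is essentially two citations: once Theorem \ref{noq5} guarantees an irrational pencil of genus $>1$, statement (i) is Beauville's theorem from the appendix to \cite{deb1} ($p_g\ge 2q-4$ for irregular surfaces of general type, with equality only for the product of a curve of genus $2$ and a curve of genus $q-2$), and statement (ii) is the classification in \cite[Thm.~1.1]{mr} of surfaces with $p_g=2q-3$ that \emph{do} have an irrational pencil of genus $>1$. You instead re-derive both classifications by hand for $q=5$: enumerating $(g,b)$ via $q\le b+g$ and $\chi(\OO_S)\ge(g-1)(b-1)$, deducing isotriviality either from the equality case $q_f=g$ (where Beauville's lemma in fact gives $S\cong B\times F$ directly, so the detour through the structure theorem of isotrivial fibrations is unnecessary there) or from Arakelov's theorem when $\deg f_*\omega_{S/B}=0$, and finishing with Riemann--Hurwitz. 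This buys self-containedness at the cost of reproving the two cited results in the case at hand. Two points to tighten: first, dispose of $p_g\le 4$ explicitly via $\chi(\OO_S)\ge 1$ for minimal surfaces of general type; second, the sentence ``as a free $\Z_2$-action on a genus-$2$ curve does not exist, $\sigma$ must act freely on $C$'' is a non sequitur as written --- the freeness of the action on $C$ (equivalently, the absence of multiple fibres) should be extracted from $\deg f_*\omega_{S/B}=0$, which makes $f$ a holomorphic fibre bundle with monodromy group $G$ acting on $C$ as a free deck group; without that, the structure theorem only presents $S$ up to birational equivalence and you would still need to exclude ramification of $C\to B$, which the equality $\chi(\OO_S)=(g-1)(b-1)$ does.
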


The idea of the proof of Theorem \ref{noq5}  is to obtain contradictory upper and  lower bounds for $K^2_S$ under the assumption that $p_g(S)<8$ and $S$ is minimal.

For fixed $q$ and $p_g$, by  Noether's formula  giving an upper bound for $K^2$ is the same as giving a lower  bound for the topological Euler characteristic $c_2$. More precisely, it is the same as giving a lower bound for $h^{1,1}$, the only Hodge  number which is not determined by $p_g$ and $q$. 
 In our situation, the upper bound follows directly  from the result of \cite{cau}  that  if  $S$ is a  surface  of general type with $q=5$, having  no irrational pencils, then  $h^{1,1}\geq  11+t$, where $t$ is  bigger or equal to the number of curves contracted by the Albanese map. 
 
If the canonical system $|K_S|$ has no fixed components, one can apply the results of \cite{bnp} to get a lower bound for $K^2_S$ which is enough to rule out this possibility. Hence the bulk of the proof consists in obtaining a lower bound for $K^2_S$ under the assumption that $|K_S|$ has a fixed part $Z>0$. This is done in \S \ref{sec:reider}, where we improve by 1 in the case $Z>0$ a well known inequality  for surfaces with birational bicanonical map due to Debarre  (cf. Corollary \ref{canonical}). The proof is based on a subtle numerical analysis of the intersection properties of the fixed and moving part of $|K_S|$  that is,  we believe,   of independent interest.
\bigskip

 It would be possible to generalize   Theorem  \ref{noq5} for $q\geq 6$, if a good lower bound for   $h^{1,1}(S)$ could be established.   Unfortunately it is very difficult to extend the  methods of  \cite{cau}  for $q\geq 6$.  Recently, a lower bound on $h^{1,1}$ has been obtained   in \cite{lapo} by completely different methods, but   it is not  strong enough for our purposes.
\smallskip

\noindent {\em Acknowledgments:\/}  This research was partially supported by  FCT (Portugal) through program POCTI/FEDER and Project 
PTDC/MAT/099275/2008 and by MIUR (Italy) through project   PRIN 2007 \emph{``Spazi di moduli e teorie di Lie''}.

We wish to thank Letterio Gatto and Stavros Papadakis  for very helpful  conversations.

  \bigskip
\noindent{\bf Notation and conventions:} a {\em surface} is  a smooth complex projective surface. We use the standard notation for the invariants of a surface $S$: $p_g(S):=h^0(\omega_S)=h^2(\OO_S)$ is the {\em geometric genus}, $q(S):=h^0(\Omega^1_S)=h^1(\OO_S)$ is the {\em irregularity} and $\chi(S):=p_g(S)-q(S)+1$ is the {\em Euler--Poincar\'e} characteristic.

An {\em irrational pencil of genus $b$} of a surface $S$ is a fibration $f\colon S\to B$, where $B$ is a smooth curve of genus $b>0$.

We use $\equiv$ to denote linear equivalence and $\sim$ to denote numerical equivalence of divisors.

\section{Reider divisors} \label{sec:reider}

Let $S$ be a surface and let  $M$ be a nef and big divisor  on $S$ such that $M^2\geq 5$. By Reider's theorem,  if a point $P$ of $S$ is a base point of $|K_S+M|$, then there is an effective divisor $E$ passing through $P$ 
such that either:
\begin{itemize}  \item $E^2=-1$, $ME=0$ or \item $E^2=0$, $ME=1$.\end{itemize} 
This suggests the following definition: 
\begin{defn}  Let $M$ be a nef and big divisor  on a surface $S$.  An effective  divisor  $E$  such that $E^{2}=k $ and $EM=s$ is called a $(k,s)$ divisor of $M$. 
\end{defn}
By \cite[(0.13)]{ccm}, the  $(-1,0)$ divisors and the $(0,1)$ divisors are $1$-connected. 
 In addition, if $E$ is a $(-1,0)$ divisor, using the index theorem one shows that the intersection form on the components of $E$  is negative definite. In particular, there exist only finitely many $(-1,0)$ divisors of $M$ on $S$.

  \begin{lem}\label{lem:01divisor} Let $M$ be a nef and big divisor  on a projective surface $S$.   Then:
  \begin{enumerate}
   \item if $C$ is an irreducible component of a  reducible $(0,1)$ divisor $E$  of $M$, then $C^2<0$;
  \item if  $E_1,E_2$ are two distinct $(0,1)$  divisors  of $M$,  then  
$E_1E_2= 0$ and  $E_1$ and $E_2$ are disjoint.
\end{enumerate}
\end{lem}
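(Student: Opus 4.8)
The plan is to combine the Hodge index theorem with the $1$-connectedness of $(0,1)$ divisors recalled above. Since $M^2>0$, the intersection form is negative definite on the orthogonal complement $M^\perp\subset\Num(S)\otimes\R$; this is the engine of the whole argument. Because $M$ is nef, every irreducible component $C$ of a $(0,1)$ divisor satisfies $MC\ge 0$, and from $EM=\sum_i n_i\,MC_i=1$ exactly one component $C_0$ has $MC_0=1$ (with multiplicity $1$), all others having $MC=0$. Each of these others lies in $M^\perp$ and, being an irreducible (hence not numerically trivial) curve, satisfies $C^2<0$. Thus the only component needing separate treatment is the distinguished one $C_0$.

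For (i) I must show $C_0^2<0$ as well. Hodge index gives $M^2\cdot C_0^2\le (MC_0)^2=1$, so $C_0^2\le 0$ once $M^2\ge 2$; it remains to exclude $C_0^2=0$. Assuming this, write $E=C_0+F$ with $F=E-C_0$; reducibility of $E$ forces $F>0$, and $MF=0$. By $1$-connectedness $a:=C_0F\ge 1$, while $E^2=0$ and $C_0^2=0$ give $F^2=-2a$. Now I examine the Gram matrix of $M,C_0,F$: its determinant equals $a(2-M^2a)$. Since the ambient form has signature $(1,\ast)$, the restricted form has exactly one positive eigenvalue (it contains $M$), so its determinant is a product of one positive and two nonpositive numbers, hence $\ge 0$. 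This forces $M^2a\le 2$, impossible for $M^2\ge 3$ (in particular under the standing hypothesis $M^2\ge 5$ of this section). Therefore $C_0^2<0$.

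For (ii), first $M(E_1-E_2)=0$, so $E_1-E_2\in M^\perp$ and $(E_1-E_2)^2=-2E_1E_2\le 0$, giving $E_1E_2\ge 0$. For the opposite inequality apply Hodge index to $E_1+E_2$: from $M^2(E_1+E_2)^2\le\bigl((E_1+E_2)M\bigr)^2=4$ and $(E_1+E_2)^2=2E_1E_2$ I obtain $M^2\cdot E_1E_2\le 2$, so $E_1E_2\ge 1$ is impossible for $M^2\ge 3$. Hence $E_1E_2=0$; then $(E_1-E_2)^2=0$ with $E_1-E_2\in M^\perp$ and negative definiteness give $E_1\sim E_2$.

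Turning $E_1E_2=0$ into actual disjointness is, I expect, the crux. If $E_1$ and $E_2$ shared no component, $E_1E_2=0$ would already force them apart; so suppose they have a common part $C_0>0$ and write $E_1=C_0+A$, $E_2=C_0+B$ with $A,B$ sharing no component. From $E_1\sim E_2$ we get $A\sim B$, and $A,B>0$ (otherwise $E_1=E_2$); since $A,B$ have no common component, $A^2=A\!\cdot\!B\ge 0$. But $MA=MB=1-MC_0\in\{0,1\}$: if $MC_0=1$ then $A\in M^\perp$ yields $A^2<0$, a contradiction; if $MC_0=0$ then Hodge index gives $A^2\le 0$, so $A^2=0$, and the Gram matrix of $M,C_0,A$ with $g:=C_0A\ge 1$ (from $1$-connectedness of $E_1$) again has determinant $g(2-M^2g)\ge 0$, forcing $M^2g\le 2$ and contradicting $M^2\ge 3$. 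Hence there is no common component and $E_1,E_2$ are disjoint. The recurring obstacle in both parts is that the single component meeting $M$ escapes the negative-definiteness argument, so it must be controlled by the rank-three Hodge-index estimate together with $1$-connectedness; this, rather than any single computation, is where the real work lies.
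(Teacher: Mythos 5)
Your proof is correct and rests on the same two pillars as the paper's --- the Hodge index theorem and the $1$-connectedness of $(0,1)$ divisors --- but the two delicate steps are executed differently. In (i), to exclude $C_0^2=0$ the paper notes that $1$-connectedness gives $EC_0\ge 1$, hence $(E+C_0)^2\ge 2$ while $M(E+C_0)=2$, contradicting the index theorem for $M^2\ge 5$; you instead use the sign of the $3\times 3$ Gram determinant $a(2-M^2a)$, a uniform device you reuse later (and your signature argument for why this determinant is $\ge 0$ is valid, including the degenerate case where the three classes are dependent). In (ii), after the essentially identical derivation of $E_1E_2=0$ and $E_1\sim E_2$, the paper writes $E_1=A+B$, $E_2=A+C$ with common part $A$, deduces $B^2=C^2=BC=0$ from $(B-C)^2=0$ together with $B^2,C^2\le 0$ and $BC\ge 0$, recognizes $B$ as a new $(0,1)$ divisor, and forces $A=0$ by applying the just-proved orthogonality statement plus $1$-connectedness; you avoid that bootstrap and instead rule out a common component directly, by a case split on its intersection with $M$ and a second Gram-determinant computation. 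Both routes are sound; yours is self-contained and needs only $M^2\ge 3$, while the paper's is shorter at each step and recycles the first half of (ii).
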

\begin{proof}
Let $E$, $C$ be as in (i). The index theorem gives   $C^2<0$ if $MC=0$ and $C^2\le 0$ if $MC=1$. Assume that $C^2=0$. Then $EC=(E-C)C>0$, since $E$ is $1$-connected, and therefore $(E+C)^2\ge 2$. Since $M^2\geq 5$  and $M(C+E)=2$ we have a contradiction to the index theorem.  Hence $C^2<0$.

Next we prove (ii). We have:
  $$M^2\geq 5,\quad M(E_1+E_2)=2, \quad M(E_1-E_2)=0,$$
  hence  by the index theorem we obtain: 
  $$2E_1E_2=(E_1+E_2)^2\leq 0, \quad -2E_1E_2=(E_1-E_2)^2\le 0.$$
So $E_1E_2=0$.
By 1-connectedness of $E_1$, $E_2$ we conclude that  neither divisor is contained in the other.  Then we can   write  $E_1=A+B$, $E_2=A+C$
where $A\geq 0$, $B, C>0$  and $B$ and $C$ have no common components.

Since $M$ is nef and $ME_i=1$, we have $1\geq MB(=MC)$ and so $B^2\leq 0, C^2\leq 0$.   
Then, since $0=(E_1-E_2)^2=(B-C)^2$, we conclude that $B^2=C^2=BC=0$. Hence $B$ and $C$ are disjoint, $MB=MC=1$ and $B$ is numerically equivalent to $C$.   Since $B$ is also a $(0,1)$ divisor, $BE_1=0$ and so, by 1-connectedness of $E_1$ we conclude that $A=0$.
\end{proof}

\begin{lem}\label{rational} Let $S$ be a surface and let $M$ be a nef and big divisor such that  the linear system $|M|$ has no fixed components.
  Let $E$ be a  $(0,1)$  divisor  of $M$ and let $C$ be the only  irreducible component of $E$ such that $MC=1$.
Then either   $|M|$ has a  base point on $C$ or  $C$ is a smooth rational curve.
\end{lem}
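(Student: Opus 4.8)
The plan is to argue by the dichotomy in the statement: assuming that $|M|$ has no base point on $C$, I would show that $C$ is a smooth rational curve by analysing the morphism that $|M|$ induces on $C$. First I would pin down $C$ explicitly. Writing $E=\sum n_iC_i$ with the $C_i$ distinct irreducible curves and $n_i\ge 1$, the relation $ME=\sum n_i(MC_i)=1$ together with the nefness of $M$ (so that each $MC_i\ge 0$ is a nonnegative integer) forces exactly one component $C=C_{i_0}$ to satisfy $MC=1$, occurring with multiplicity $1$, while $MC_i=0$ for all other components; in particular $C$ is a reduced irreducible curve. Since $|M|$ has no fixed components, $C$ does not lie in the base locus of $|M|$, so under the standing hypothesis $\mathrm{Bs}|M|\cap C=\emptyset$ the rational map $\fie_{|M|}$ is a morphism along $C$ and restricts to a morphism $\psi\colon C\to\pp^N$ with $\psi^*\OO_{\pp^N}(1)=\OO_C(M)$, a line bundle of degree $MC=1$ on $C$.

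Next I would run the degree bookkeeping. Because $\deg\psi^*\OO_{\pp^N}(1)=1\ne 0$, the morphism $\psi$ cannot be constant, so its image $\Gamma$ is an irreducible curve and $\psi\colon C\to\Gamma$ is finite (it is proper, being a morphism out of a projective curve, and quasi-finite, since $\Gamma$ is one-dimensional). Multiplicativity of degree under pullback along the finite map $\psi$ then gives $1=\deg\psi^*\OO_{\pp^N}(1)=(\deg\psi)(\deg\Gamma)$, where $\deg\Gamma$ denotes the degree of $\Gamma$ in $\pp^N$. As both factors are positive integers, we obtain $\deg\Gamma=1$ and $\deg\psi=1$; hence $\Gamma$ is a line, so $\Gamma\iso\pp^1$, and $\psi$ is birational onto $\Gamma$.

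Finally I would upgrade "birational" to "isomorphism": $\psi\colon C\to\Gamma$ is a finite birational morphism from the reduced curve $C$ onto the smooth, hence normal, curve $\Gamma\iso\pp^1$, so $\psi_*\OO_C$ is a finite $\OO_\Gamma$-subalgebra of the common function field and therefore coincides with $\OO_\Gamma$ by integral closedness, making $\psi$ an isomorphism. Thus $C\iso\pp^1$ is a smooth rational curve, which is the alternative we were after. The heart of the argument is just the elementary equation $(\deg\psi)(\deg\Gamma)=MC=1$; the step I expect to require the most care to phrase cleanly is the last one, namely deducing that a finite birational morphism onto $\pp^1$ is an isomorphism even when $C$ is a priori allowed to be singular, since this is exactly what rules out $C$ being, say, a cuspidal or nodal rational curve.
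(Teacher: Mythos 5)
Your proof is correct and follows essentially the same route as the paper's (much terser) argument: assume $|M|$ is base-point-free along $C$, note that the induced linear system on $C$ has degree $MC=1$ and positive projective dimension, and conclude $C\iso\pp^1$. Your extra care in justifying that a finite birational map onto the line is an isomorphism is exactly the detail the paper leaves implicit, so the two proofs differ only in the level of detail, not in substance.
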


\begin{proof}  Suppose $|M|$ has no base points on $C$. 
Then, since $MC=1$ the  restriction map $H^0(M)\to H^0(C, M)$ has image of dimension at least $2$. It follows that $C$ is a smooth rational curve.
\end{proof} 
 \begin{prop}\label{fixed}Let  $X$ be a non ruled surface and let $M$ be a divisor of $X$ such that:
 \begin{itemize}
 \item $M^2\geq 5$, 
   \item the system $|M|$ has no fixed component and maps $X$ onto a surface.
\end{itemize}
Let  $C$ be  an irreducible curve contained in the fixed locus of $|K_X+M|$.  Then either:
\begin{enumerate}
\item $C$ is contained in a $(-1,0)$ divisor of $M$, $MC=0$ and $C^2<0$; 

 or
\item $C$ is contained in a $(0,1)$ divisor of $M$, $MC\leq 1$ and $C^2\leq 0$.
\end{enumerate}
 \end{prop}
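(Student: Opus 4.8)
The plan is to apply Reider's theorem at every point of $C$ and then exploit the combinatorics of $(-1,0)$ and $(0,1)$ divisors from Lemmas \ref{lem:01divisor} and \ref{rational}, bringing in the non-ruledness of $X$ at the end. First I would record that $M$ is nef and big: since $|M|$ has no fixed component, a general member of $|M|$ contains no prescribed irreducible curve $D$, whence $MD\ge 0$ for all $D$ and $M$ is nef, while $M^2\ge 5>0$ makes it big (the hypothesis that $|M|$ maps $X$ onto a surface just reflects $M^2>0$). Thus Reider's theorem applies. Because $C$ is a curve in the fixed locus of $|K_X+M|$, it is a fixed component, so \emph{every} point $P\in C$ is a base point of $|K_X+M|$, and Reider's theorem puts each such $P$ on a Reider divisor of $M$, either a $(-1,0)$ divisor or a $(0,1)$ divisor.

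The heart of the argument is to show that $C$ must itself be a component of one such Reider divisor; suppose it is not. The $(-1,0)$ divisors are finite in number (their components carry a negative definite intersection form), and since none contains $C$ they meet $C$ in finitely many points in total. Likewise any $(0,1)$ divisor $E$ not containing $C$ meets $C$ in the finite number $C\cdot E$ of points, and this number is constant because all $(0,1)$ divisors are numerically equivalent (Lemma \ref{lem:01divisor}). As all the infinitely many points of $C$ are base points, it follows that infinitely many \emph{distinct} $(0,1)$ divisors meet $C$. By Lemma \ref{lem:01divisor}(ii) these are pairwise disjoint, and so are their distinguished components $C_\lambda$, the unique component of each with $MC_\lambda=1$.

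Next I would manufacture a ruling of $X$ from this family, which I expect to be the main obstacle, since it is where the numerical input of the two lemmas must be fused with the geometric hypothesis on $X$. By Lemma \ref{rational} each $C_\lambda$ is smooth rational unless $|M|$ has a base point on it; as $|M|$ has only finitely many base points and the $C_\lambda$ are disjoint, all but finitely many $C_\lambda$ are smooth rational. Moreover pairwise disjoint curves of negative self-intersection are linearly independent in $\NS(X)$, hence finite in number, so all but finitely many $C_\lambda$ satisfy $C_\lambda^2=0$; for these Lemma \ref{lem:01divisor}(i) forces the ambient $(0,1)$ divisor to be irreducible, i.e.\ $E=C_\lambda$, so they are mutually numerically equivalent smooth rational curves with $C_\lambda^2=0$. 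Infinitely many curves in a single numerical class have constant degree and arithmetic genus, hence move in a positive-dimensional algebraic family, and two distinct members, being numerically equivalent with self-intersection $0$, are disjoint. Thus the family is a fibration of $X$ with rational general fibre, making $X$ ruled and contradicting the hypothesis.

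Finally, once $C$ is known to be a component of a Reider divisor, the two alternatives follow at once. If $C$ lies in a $(-1,0)$ divisor $F$, then $MF=0$ together with nefness of $M$ forces $MC=0$, and negative definiteness of the intersection form on the components of $F$ gives $C^2<0$, which is conclusion (i). If instead $C$ lies in a $(0,1)$ divisor $E$, then $M(E-C)\ge 0$ yields $MC\le ME=1$, while $C^2\le 0$ by Lemma \ref{lem:01divisor}(i) (with $C^2=0$ in the irreducible case $E=C$), which is conclusion (ii).
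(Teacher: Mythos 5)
Your proposal is correct and follows essentially the same route as the paper: apply Reider's theorem at each point of $C$, use the finiteness of $(-1,0)$ divisors and the pairwise disjointness of $(0,1)$ divisors to produce infinitely many of the latter, invoke Lemma \ref{rational} to contradict non-ruledness, and then read off the two alternatives from nefness and the index theorem. The only difference is that you spell out in detail how the infinitely many disjoint smooth rational curves with self-intersection $0$ yield a ruling, a step the paper leaves implicit.
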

\begin{proof}   

Let $P\in C$ be a point.   By Reider's theorem, there is a $(-1,0)$ divisor or a $(0,1)$ divisor of $M$ passing through  $P$. 

Assume for contradiction   that $C$ is not a component of any $(-1,0)$ or $(0,1)$ divisor of $M$.  Since there are  only  finitely  many  distinct $(-1,0)$ divisors of $M$ in $S$,  we can assume that there is a $(0,1)$ divisor passing through a general point $P$ of $C$.   
It follows that there are infinitely many  $(0,1)$ divisors on $S$. Recall that two distinct $(0,1)$ divisors are disjoint  by Lemma \ref{lem:01divisor}.   Thus, since $|M|$ has a finite number of base points, by Lemma \ref{rational} $X$ is  ruled, against the assumptions.

So $C$  is contained in a  $(-1,0)$ divisor or a $(0,1)$ divisor $E$ of $M$. In the first case, $M$ being nef implies that $MC=0$ and so  $C^2<0$ by the index theorem. In the second case, again by nefness $MC\leq 1$ and again by the index theorem $C^2\leq 0$.
\end{proof}

 \begin{lem}\label{EL}
  Let $S$ be a surface and let  $M$ be a nef and big divisor  of  $S$ and let $E$ be a $(0,1)$ divisor of $M$.  If $L$ is a  divisor such that $(M-L)^2>0$ and $M(M-L)>0$, then $EL\leq 0$.
  \end{lem}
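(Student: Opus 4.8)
The plan is to rewrite the desired inequality in terms of the auxiliary class $N := M - L$ and then to extract the \emph{sign} of $N\cdot E$ from the Hodge index theorem. Since $E$ is a $(0,1)$ divisor we have $E^2=0$ and $ME=1$, so $EL = E(M-N) = ME - EN = 1 - EN$. Thus the assertion $EL\le 0$ is equivalent to $EN\ge 1$, and because $EN$ is an integer it suffices to prove the strict inequality $EN>0$. The two hypotheses are exactly tailored to this: $N^2=(M-L)^2>0$ and $MN=M(M-L)>0$.

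First I would record that $N$ and $E$ lie in the closure of one and the same component of the positive cone $\{D\in\Num(S)_{\R}:D^2>0\}$. Indeed $M$ is nef and big, so $M^2>0$; since $MN>0$ and $N^2>0$, the class $N$ lies in the same component as $M$, and since $ME=1>0$ and $E^2=0$, the nonzero class $E$ lies in the closure of that component. The geometry of the light cone then forces $NE>0$, which is precisely the inequality we want.

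Concretely, to stay within the index-theorem formalism used above, I would argue as follows. Because $N^2>0$, the intersection form is negative definite on the orthogonal complement $N^{\perp}\subset\Num(S)_{\R}$. The divisor $E$ is effective and nonzero with $ME=1$, hence not numerically trivial; so $E\notin N^{\perp}$ (otherwise $E^2<0$, contradicting $E^2=0$), giving $EN\ne 0$. To pin down the sign, suppose for contradiction that $EN<0$ and consider $D:=(MN)E-(EN)M$, which satisfies $DN=0$. The index theorem yields $D^2\le 0$, whereas a direct expansion gives $D^2=-2(MN)(EN)+(EN)^2M^2>0$, both summands being positive under $MN>0$, $EN<0$, $M^2>0$. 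This contradiction shows $EN>0$, and the proof concludes as above.

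The only genuinely delicate point is the \emph{sign} of $NE$: the Hodge index inequality $(NE)^2\ge N^2E^2=0$ is vacuous here, so positivity cannot be read off from it directly and must instead be deduced from the positions of $N$ and $E$ relative to the positive cone. Everything else is bookkeeping and integrality.
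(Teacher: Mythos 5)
Your proof is correct and follows essentially the same route as the paper: both arguments reduce the claim to $E(M-L)>0$ (using $ME=1$ and integrality) and obtain that positivity from the Hodge index theorem applied to an auxiliary linear combination. The only cosmetic difference is that the paper tests the single class $M(M-L)\cdot E-(M-L)$ against $M^{\perp}$ in one stroke, whereas you project onto $(M-L)^{\perp}$ and split off the case $E(M-L)=0$ separately.
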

\begin{proof}   

Write $\gamma:=M(M-L)$. Then  $M(\gamma E-(M-L))=0$. Since   $(M-L)^2>0$ and $E^2=0$, $\gamma (M-L)\not\sim E$. Thus, by the index theorem $0>  (\gamma E-(M-L))^2= -2\gamma E(M-L)+(M-L)^2$.

So $E(M-L)>0$, and therefore $EL\leq 0$.\end{proof}

\begin{prop}\label{2M} Let $S$ be a smooth  minimal surface of general type and let $M$ be a  divisor such that 
 \begin{itemize}  
 \item  $Z:=K_S-M>0$;
 \item the linear system $|M|$ has no fixed components and maps $S$ onto a surface. 
 \end{itemize} 
 Then the following hold:
 \begin{enumerate}
 \item if  $M^2\geq 5+KZ$,  then $h^0(2M)<h^0(K_S+M)$;
 \item if   $M^2\geq 5$,  $(M-Z)^2>0$ and $M(M-Z)>0$, then  there are no $(0,1)$ divisors of $M$.   Furthermore $h^0(2M)<h^0(K_S+M)$ and every irreducible fixed component $C$ of $|K_S+M|$ satisfies  $MC=0$.
 \end{enumerate}\end{prop}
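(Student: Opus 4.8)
The plan is to reduce the inequality $h^{0}(2M)<h^{0}(K_S+M)$ to a statement about the fixed part of $|K_S+M|$, to settle case (ii) by excluding $(0,1)$ divisors, and to isolate the numerical estimate that case (i) requires.

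First I would record two facts that hold under either hypothesis. Since $|M|$ has no fixed component $M$ is nef, and since $|M|$ maps $S$ onto a surface $M^{2}>0$; thus $M$ is nef and big and the theory of Reider divisors applies. Moreover $MZ>0$: were $MZ=0$, every component of $Z$ would meet $M$ in $0$ and so lie in $M^{\perp}$, which is negative definite by the index theorem (as $M^{2}>0$); then $Z^{2}<0$ and $K_SZ=MZ+Z^{2}=Z^{2}<0$, against the nefness of $K_S$. (As $M^{2}+K_SM$ is even, $MZ$ is even, so in fact $MZ\ge 2$.) From $K_S+M-2M=Z$ I use the exact sequence
\[
0\to\OO(2M)\to\OO(K_S+M)\to\OO_Z\big((K_S+M)|_Z\big)\to 0 .
\]
Left exactness already gives $h^{0}(K_S+M)-h^{0}(2M)=\dim\operatorname{im}\rho$ for the restriction $\rho\colon H^{0}(K_S+M)\to H^{0}\big(Z,(K_S+M)|_Z\big)$, so it suffices to show $\rho\neq 0$, i.e. that $Z$ is not contained in the fixed part of $|K_S+M|$. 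Since $Z+|2M|\subseteq|K_S+M|$ and $|2M|$ has no fixed part, one always has $\operatorname{Fix}|K_S+M|\le Z$; the point is thus to prove $\operatorname{Fix}|K_S+M|\lneq Z$. Using Kawamata--Viehweg vanishing ($H^{1}(K_S+M)=0$) and Riemann--Roch, this is equivalent to the cohomological inequality
\[
h^{1}(2M)<\tfrac32\,MZ+h^{0}(Z-M),
\]
which is the convenient form for case (i).

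For case (ii) I would first rule out $(0,1)$ divisors. If $E$ is one, the hypotheses $(M-Z)^{2}>0$ and $M(M-Z)>0$ are exactly those of Lemma \ref{EL} with $L=Z$, giving $EZ\le 0$. By adjunction $E^{2}+K_SE=2p_{a}(E)-2$ is even, and since $E^{2}=0$ this says $K_SE$ is even, so $K_SE=ME+ZE=1+ZE$ forces $ZE$ to be odd; combined with $EZ\le 0$ this yields $ZE\le -1$ and hence $K_SE\le 0$. As $K_S$ is nef and $E>0$, $K_SE=0$; but $K_S^{2}>0$, so by the index theorem $K_SE=0$ gives $E^{2}\le 0$ with equality only when $E\equiv 0$, contradicting $E^{2}=0$, $E>0$. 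With no $(0,1)$ divisors, Proposition \ref{fixed} shows every irreducible fixed component $C$ of $|K_S+M|$ lies in a $(-1,0)$ divisor, whence $MC=0$; this is the last assertion of (ii). Finally, because $MZ>0$, some component $C_{0}$ of $Z$ has $MC_{0}>0$ and so is not a fixed component; therefore $\operatorname{Fix}|K_S+M|\lneq Z$, $\rho\neq 0$, and $h^{0}(2M)<h^{0}(K_S+M)$.

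Case (i) is where the real work lies, and where $M^{2}\ge 5+KZ$ must enter. By the reduction above I must exclude the bad case $\operatorname{Fix}|K_S+M|=Z$; equivalently I must break the equality $h^{1}(2M)=\tfrac32 MZ+h^{0}(Z-M)$ that it would force. In that case every component of $Z$ satisfies $MC\le 1$ (Proposition \ref{fixed}); the components with $MC=1$ are the distinguished components of pairwise disjoint $(0,1)$ divisors (Lemma \ref{lem:01divisor}), those with $MC=0$ sit in $(-1,0)$ divisors, and the moving part of $|K_S+M|$ is exactly $2M$. I expect the main obstacle to be precisely here: one must run the subtle numerical analysis of how $Z$ meets these Reider divisors --- exploiting their disjointness, the index theorem and Lemma \ref{EL}-type estimates --- so as to extract from $\operatorname{Fix}|K_S+M|=Z$ an upper bound $M^{2}\le 4+KZ$, contradicting the hypothesis. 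The difficulty, relative to case (ii), is that neither $(M-Z)^{2}>0$ nor $M(M-Z)>0$ is available for free, so Lemma \ref{EL} cannot be invoked directly and the quantitative input $M^{2}\ge 5+KZ$ must be fed into the intersection estimates by hand.
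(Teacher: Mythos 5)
There is a genuine gap: you never actually prove part (i). Your preliminaries (nef and bigness of $M$, $MZ\ge 2$, the reduction of $h^0(2M)<h^0(K_S+M)$ to $\operatorname{Fix}|K_S+M|\lneq Z$) are correct, and your part (ii) is complete and essentially the paper's argument --- Lemma \ref{EL} with $L=Z$ gives $EZ\le 0$ for any $(0,1)$ divisor $E$, then the parity of $K_SE$ (adjunction with $E^2=0$) together with $K_SE=1+EZ$ and the index theorem yields a contradiction, after which Proposition \ref{fixed} and $MZ\ge 2$ finish. But for part (i) you only describe what \emph{should} happen (``one must run the subtle numerical analysis \dots so as to extract an upper bound $M^2\le 4+KZ$'') without carrying it out; you even identify the obstacle, namely that $(M-Z)^2>0$ and $M(M-Z)>0$ are unavailable so Lemma \ref{EL} cannot be invoked with $L=Z$, and then stop there. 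A plan plus a correctly identified difficulty is not a proof.

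The missing idea is to apply Lemma \ref{EL} \emph{component by component} rather than to $Z$ as a whole. Assume for contradiction that $Z$ is the fixed part of $|K_S+M|$, and let $C$ be any irreducible component of $Z$. Proposition \ref{fixed} gives $MC\le 1$ and $C^2\le 0$; adjunction gives $C^2+K_SC\ge -2$, and nefness of $K_S$ gives $K_SC\le K_SZ$, hence $C^2\ge -2-K_SZ$. Therefore
\begin{equation*}
(M-C)^2=M^2-2MC+C^2\ \ge\ M^2-4-K_SZ\ >\ 0
\end{equation*}
by the hypothesis $M^2\ge 5+K_SZ$, and $M(M-C)\ge (M-C)^2-C^2\ge (M-C)^2>0$. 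So the hypotheses of Lemma \ref{EL} hold with $L=C$ for \emph{every} component $C$ of $Z$, giving $EC\le 0$ for any $(0,1)$ divisor $E$, and summing over the components (with multiplicities) yields $EZ\le 0$. Now $MZ\ge 2$ produces a component $D$ of $Z$ with $MD=1$, which by Proposition \ref{fixed} lies in a $(0,1)$ divisor $E$; then $K_SE=ME+ZE=1+EZ\le 1$, contradicting the fact that $K_SE$ is positive (index theorem, since $E^2=0$ and $E\not\equiv 0$) and even (adjunction). This is exactly how the quantitative hypothesis $M^2\ge 5+K_SZ$ ``enters by hand'': it is what makes the Lemma \ref{EL} hypotheses verifiable for each single component $C$, which is the step your sketch lacks.
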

\begin{proof}
We observe first of all that  $h^{0}(2M)=h^{0}(K_S+M)$ if and only if $Z$ is the fixed part of $|K_S+M|$.

 {\rm (i)} Assume for  contradiction that
 $h^{0}(2M)=h^{0}(K_S+M)$. 
 Let $C$ be an irreducible component of $Z$. By Proposition  \ref{fixed},  
$C^{2}\leq 0$  and $MC\leq 1.$  Now $$-2\leq C^{2}+KC\leq C^{2}+KZ,$$ and hence $C^{2}\geq -2-KZ.$
It follows
$$(M-C)^{2}=M^{2}-2MC +C^2\geq  M^{2}-2 -2 -KZ=M^2-4-KZ>0.$$ 
In addition, we have:
$$M(M-C)=(M-C)^2+C(M-C)\ge (M-C)^2-C^2\ge (M-C)^2>0.$$
 Since  $MZ\geq 2$ by the  2-connectedness of canonical  divisors,   there is at least a component $D$ of $Z$ such that $MD>0$.  By Proposition \ref{fixed}, we have  $MD=1$ and $D$ is contained in  a $(0,1)$ divisor $E$ of $M$.
Then  Lemma \ref{EL} gives  $EC\le 0$  for all the components of $Z$,  and so  $EZ\le 0.$

But now since $ME=1$ and $E^{2}=0$ we obtain that
$KE=1+EZ\le 1$. On the other hand, $K_SE$ is $>0$ by the index theorem and it is even by the adjunction formula, hence we have   a contradiction.
\medskip

   {\rm (ii)} Let $E$ be a $(0,1)$ divisor of $M$. Then   we have $EZ\le 0$ by Lemma \ref{EL} and we get a contradiction as above. So there are no $(0,1)$ divisors of $M$ on $S$.
 Hence by  Proposition \ref{fixed} every irreducible fixed curve of $|K_S+M|$ satisfies $MC=0$. Since $MZ\geq 2$ by the  2-connectedness of the canonical divisors, not every component of $Z$ can be a fixed component of $|K_S+M|$ and therefore $h^0(K_S+M)>h^0(2M)$.
\end{proof}

As  a consequence, we obtain the following refinement of Thm. 3.2 and Rem. 3.3 of \cite{deb1}:

\begin{cor}\label{canonical}  Let $S$ be a minimal surface of general type whose  canonical map is not composed with a pencil.   Denote by $M$ the moving part and by $Z$ the fixed part of $|K_S|$. If  $Z>0$ and $M^2\geq 5+K_SZ$, then 
$$K_S^2+\chi(S)=h^0(K_S+M)+K_SZ+MZ/2 \geq h^0(2M)+K_SZ+MZ/2+1.$$

Furthermore,  if  $h^0(K_S+M)=h^0(2M)+1$ then  $|K_S+M|$ has base points and  there is a $(-1,0)$ divisor or a $(0,1)$ divisor $E$  of $M$ such that $EZ\geq 1$.
  \end{cor}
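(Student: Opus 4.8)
The plan is to treat the displayed identity, the inequality, and the final refinement separately. The identity is pure Riemann--Roch. Since the moving part $M$ of $|K_S|$ has $0$-dimensional base locus it is nef, and $M^2\ge 5+K_SZ\ge 5$ makes it big; hence Kawamata--Viehweg vanishing gives $h^1(K_S+M)=h^2(K_S+M)=0$ and $h^0(K_S+M)=\chi(S)+\tfrac12 M(K_S+M)=\chi(S)+M^2+MZ/2$. Using $K_S=M+Z$ to write $K_S^2=M^2+MZ+K_SZ$, one checks that $K_S^2+\chi(S)$ equals $h^0(K_S+M)+K_SZ+MZ/2$, as claimed. For the inequality I would simply invoke Proposition \ref{2M}(i): the hypotheses $Z=K_S-M>0$, $|M|$ without fixed part mapping $S$ onto a surface (the canonical map is not composed with a pencil), and $M^2\ge 5+K_SZ$ are exactly its assumptions, so $h^0(K_S+M)\ge h^0(2M)+1$; adding $K_SZ+MZ/2$ to both sides gives the stated bound.

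For the refinement I would first record the decisive elementary remark that \emph{every} $(-1,0)$ or $(0,1)$ divisor $E$ of $M$ already satisfies $EZ\ge 1$. Indeed $EZ=EK_S-EM$: for a $(-1,0)$ divisor $EM=0$ and $EK_S=2p_a(E)-1$ is odd and $\ge0$ ($K_S$ being nef), hence $\ge1$; for a $(0,1)$ divisor $EM=1$ and $EK_S=2p_a(E)-2$ is even and nonzero (if it vanished, $E$ would be orthogonal to the big and nef class $K_S$, forcing $E^2<0$ by the index theorem, against $E^2=0$), hence $\ge2$ and $EZ\ge1$. Consequently the entire content of the refinement reduces to producing a base point of $|K_S+M|$: Reider's theorem then supplies such a divisor $E$ through it, and $EZ\ge1$ is automatic.

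To produce the base point I would exploit the hypothesis $h^0(K_S+M)=h^0(2M)+1$ directly. Multiplication by the section $s_Z$ with divisor $Z$ embeds $H^0(2M)$ into $H^0(K_S+M)=H^0(2M+Z)$ with one-dimensional cokernel, so $H^0(K_S+M)=s_Z\,H^0(2M)\oplus\langle s_0\rangle$ for a single extra section $s_0$, whose divisor $D_0$ lies in $|K_S+M|$. At every point $P$ of the support of $Z$ the sections $s_Z\tau$ vanish, so a general section of $|K_S+M|$ takes the value $\alpha\,s_0(P)$ at $P$; hence if $|K_S+M|$ had no base point on $Z$ we would obtain $s_0(P)\ne0$ for all such $P$, i.e. $D_0\cap Z=\emptyset$ and $D_0Z=0$. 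But $D_0\equiv K_S+M$ gives $D_0Z=(K_S+M)Z=K_SZ+MZ\ge MZ\ge2$ by the $2$-connectedness of the canonical divisor, a contradiction. Thus $|K_S+M|$ must have a base point, lying on $Z$, and the proof is complete.

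I expect the manufacture of the base point to be the main obstacle. The identity and the inequality are formal consequences of Riemann--Roch and of Proposition \ref{2M}(i), and the bound $EZ\ge1$ is forced by a parity argument that holds for any Reider divisor; the real point is to see that the single extra section $s_0$ cannot avoid $Z$, and to convert this into a base point through the intersection identity $(K_S+M)Z\ge2$.
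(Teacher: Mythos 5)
Your proposal is correct and follows essentially the same route as the paper: the identity via Kawamata--Viehweg and Riemann--Roch, the inequality by direct appeal to Proposition \ref{2M}(i), and the refinement by showing that $h^0(K_S+M)=h^0(2M)+1$ forces a base point on $Z$ (your argument with the extra section $s_0$ is just an unwinding of the paper's observation that the restriction map $H^0(K_S+M)\to H^0(Z,(K_S+M)|_Z)$ has one-dimensional image, combined with $(K_S+M)Z\ge 2$), followed by Reider and the adjunction/parity argument for $EZ\ge 1$. Your explicit treatment of the parity step, including the index-theorem exclusion of $K_SE=0$ in the $(0,1)$ case, fills in details the paper leaves implicit.
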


\begin{proof}   Since $M$ is nef and big, by Kawamata-Viehweg vanishing $h^0(K_S+M)=\chi(K_S+M)$, hence the equality   follows by the Riemann-Roch theorem whilst 
the inequality   is  Proposition \ref{2M},  (i).     

For the second assertion it suffices to notice that $h^0(K_S+M)=h^0(2M)+1$  means that the image of the restriction map $H^0(K_S+M)\to H^0(Z, (K_S+M)|_Z)$ is 1-dimensional. Since $(K_S+M) Z\geq 2$, the system $|K_S+M|$ has necessarily base points.  Thus there is a $(-1,0)$ divisor or a $(0,1)$ divisor $E$ of $M$.  By adjunction $K_SE\equiv E^2$(mod 2)   and so necessarily $EZ\geq 1$.  \end{proof}

\section{Proofs of Theorem \ref{noq5} and Theorem \ref{irrpencil}}
\begin{proof}[Proof of Theorem \ref{noq5}]
Let $a\colon S\to A$ be the Albanese map of $S$. Notice that by the classification of surfaces the assumptions that  $q(S)=5$ and $S$ has no irrational pencil of genus $>1$ imply  that $S$ is of general type and $a$ is generically finite onto its image. Without loss of generality we may assume that $S$ is minimal. By \cite{appendix},  an irregular surface of general type  having no irrational pencils of genus $>1$  satisfies  $p_g\geq 2q-3$.  We assume for contradiction that $p_g(S)=7=2q(S)-3$, so that  $\chi(S)=3$.  We denote by $\fie_K\colon S\to \pp^7$ the canonical map and by $\Si$ the canonical image. Since $q(S)>2$, $\Si$ is a surface by \cite{xiaoirreg}.

We denote by $t$ the rank of the  cokernel of the 
map
$a^{\ast}\colon \NS(A) \to \NS (S)$. Note that $t$ is bigger than  or equal to the number of irreducible curves contracted by the Albanese map.

Denote as usual by  $b_i(S)$ the $i$-th Betti number and by $c_2(S)$ the second Chern class of $S$.
 By \cite[Thm.1,(3)]{cau}, we have $b_2(S)\ge 31+t$, namely  $c_2(S)\ge 13+t$. By Noether's formula this is equivalent to:
 \begin{equation}\label{causin}
  K^2_S\le 23-t
  \end{equation}

 Denote by $\G$ the   Grassmannian of $2$-planes of $H^0(\Omega^1_S)^{\vee}$ and by $\G$ the Grassmannian of $2$-planes in $H^0(\Omega^1_S)$. 
By the Castelnuovo--De Franchis theorem, the kernel of the map 
 $\rho\colon \bigwedge ^2H^{0}(\Omega^1_S) \to H^{0}(K_S)$ does not contain any  nonzero simple tensor. Hence $\rho$ induces a morphism $\G^{\vee}\to\pp(H^0(K_S))$ which is finite onto its image. Since $\dim\G^{\vee}=6$,  it follows that $\ker \rho$ has dimension $3$, $\rho$ is surjective and it induces a finite map $\G^{\vee}\to \pp(H^0(K_S))$.  As a consequence, we have the following facts:
 \begin{itemize}
\item[(a)]  the surface $S$ is generalized Lagrangian, namely there exist independent $1$-forms  $\eta_1,\dots \eta_4\in H^0(\Omega^1_S)$ such that $\eta_1\wedge\eta_2+\eta_3\wedge\eta_4=0$. In addition, we may assume that $\eta_1\wedge \eta_2$ is a general $2$-form of $S$. In that case,  the fixed part of the linear system $\pp(\wedge^2V)$, where $V=<\eta_1, \dots \eta_4>$,  coincides with the fixed part of the canonical divisor (cf. \cite[\S 3]{severi}) .
\item[(b)] the canonical image $\Si$ is contained in  the intersection of $\G$ with the codimension $3$ subspace $T=\pp(\im \rho^t) \subset \pp^9=\pp(\bigwedge ^2H^{0}(\Omega^1_S))$,
 \item[(c)] since  $\G^{\vee}$ is the dual variety of $\G$, the space $T$ is not contained in an hyperplane tangent to $\G$, hence $Y:=\G\cap T$ is a smooth threefold.
 \end{itemize}
 
  Using Lefschetz hyperplane section theorem we see that $\Pic(Y)$ is   generated by the class of a hyperplane.
Then $\Si$ is  the scheme theoretic intersection of $Y$ with a hypersurface 
of degree $m\geq 2$  of $\pp^6$. Thus, since  $\G$ has degree 5  (cf.    \cite[Cor.1.11]{mukai}), it follows that 
$\deg\Si=5m$ and $\omega_{\Si}=\OO_{\Si}(m-2)$.
 By the proof of Thm. 1.2 of  \cite{mr},  the degree $d$ of  $\fie_K$ is different from $2$.  Since $K^2_S\le 23$ by \eqref{causin}, the inequality $K^2_S\ge d\deg\Si=5dm$ gives  $d=1$, namely  $\fie_K$ is birational onto its image. So we have  $m\ge 3$,  since  $\omega_{\G}=\OO_{\G}(-5)$ (cf. \cite[ Prop. 1.9]{mukai}) and  $\Si$ is of general type.

Write $|K_S|=|M|+Z$, where $Z$ is the fixed part and $M$ is the moving part. 
If $Z=0$, then in view of (a) we have   $K_S^2\geq 8\chi=24$ by \cite[Thm.1.2]{bnp}. This would contradict \eqref{causin}, hence $Z>0$. 

 Since $m>2$, every quadric that 
 contains $\Si$ must contain $Y$. Recall that $Y$ is obtained  from  $\G$ by intersecting with  3 independent
linear sections. Denote by $R$ the homogeneous coordinate ring of
$\G$. Since  $R$ is Cohen--Macaulay and  $Y$ has codimension $3$
in  $\G$,  these 3 linear sections form an
$R$-regular sequence. As a consequence (cf. \cite[Prop.1.1.5]{bruns})  the    (vector) dimension   of the space of quadrics of $\pp^6$ containing  $Y$ is the same as  same as the     (vector) dimension of the space of quadrics of $\pp^9$ 
containg $\G$.
 Since the latter  dimension is $5$ (cf. \cite[Prop.1.2]{mukai}), it follows that:
 $$h^0(2M)\geq h^0(\OO_{\pp^6}(2))-5=23.$$
 
Then by  \eqref{causin}  and  Corollary \ref{canonical} we have:
 \begin{equation}\label{eq:23}
 26-t\geq K_S^2+\chi(S)=h^0(K_S+M)+K_SZ+MZ/2 \geq 23+K_SZ+MZ/2+1.
 \end{equation}
So $K_SZ+MZ/2\leq 2-t$.   Recall that $MZ\ge 2$ by the $2$-connectedness of canonical divisors. 
  
Assume $K_SZ=0$. Then every component of $Z$ is an irreducible smooth rational curve with self-intersection $-2$ and as such it is  contracted by the Albanese map.
Since $K_SZ+MZ/2\leq 2-t$, the only possibility is 
 $t=1$ and $MZ=2$.  Hence $Z=rA$, where $A$ is a $-2$-curve. Since $MZ=2$  and $K_SZ=0$,  we have $Z^2=-2$ and so $r=1$. Hence $Z$ is a $-2$-cycle of type $A_1$. Then, again by (a) and  \cite[Thm.12]{bnp}, we get  $K^2\geq 8\chi=24$,  a contradiction.

 So $K_SZ>0$. Then by \eqref{eq:23} necessarily $K_SZ=1$, $MZ=2$ (yielding  $Z^2=-1$) and $h^0(K_S+M)= 23=h^0(2M)+1$.  Then by Corollary \ref{canonical}, there is a $(-1,0)$ or a $(0,1)$ divisor $E$ of $M$, and, since the hypotheses of Proposition \ref{2M}, (ii) are satisfied, $E$ must be a $(-1,0)$ divisor of $M$.
 
 Then $M(E+Z)=2$ and so by the algebraic index theorem $M^2(E+Z)^2- 4\leq 0$, yielding $(E+Z)^2\leq 0$. Since  $(E+Z)^2=-2+2EZ$  and, by   Corollary \ref{canonical}, $EZ\geq 1$, the only possibility is $EZ=1$ and $(E+Z)^2=0$.   In this case $K_S(E+Z) =2$ and this is impossible
 by \cite[Proposition 8.2]{bnp}, where it is shown that a minimal irregular surface  with $q\geq 4$, having no  irrational pencils of genus $>1$, cannot have effective divisors of arithmetic genus 2 and self-intersection $0$. 
    \end{proof}
    
    \begin{proof}[Proof of Theorem \ref{irrpencil}] By \cite{appendix}, a surface of general type $S$  with $q(S)=5$ has $p_g(S)\ge 6$ and, in addition, if $p_g(S)=6$ then $S$ is the product of a curve of genus $C$ and a curve of genus 3.
   Now statement (ii) is a consequence of Theorem \ref{noq5} and \cite[Thm.1.1]{mr}.
    \end{proof}

\bigskip

\begin{minipage}{13.0cm}
\parbox[t]{6.5cm}{Margarida Mendes Lopes\\
Departamento de  Matem\'atica\\
Instituto Superior T\'ecnico\\
Universidade T{\'e}cnica de Lisboa\\
Av.~Rovisco Pais\\
1049-001 Lisboa, PORTUGAL\\
mmlopes@math.ist.utl.pt
 } \hfill
\parbox[t]{5.5cm}{Rita Pardini\\
Dipartimento di Matematica\\
Universit\`a di Pisa\\
Largo B. Pontecorvo, 5\\
56127 Pisa, Italy\\
pardini@dm.unipi.it}

\vskip1.0truecm

\parbox[t]{5.5cm}{Gian Pietro Pirola\\
Dipartimento di Matematica\\
Universit\`a di Pavia\\
Via Ferrata, 1 \\
 27100 Pavia, Italy\\
\email{gianpietro.pirola@unipv.it}}
\end{minipage}

\end{document}